\pgfplotsset{compat=1.12}
\newtheorem{thm}{Theorem}[section]
\newtheorem{cor}[thm]{Corollary}
\newtheorem{lem}[thm]{Lemma}
\theoremstyle{definition}
\newtheorem{defn}[thm]{Definition}
\theoremstyle{remark}
\newtheorem{rem}[thm]{Remark}
\newcommand{\bbR}{\mathbb{R}}
\newcommand{\bbN}{\mathbb{N}}
\newcommand{\dist}{\text{dist}}
\let\c@equation\c@thm
\numberwithin{equation}{section}
\title{Convergence of Reflected Langevin Diffusion for Constrained Sampling}
\author{Tarika Mane, Amine Boukardagha}
\date{January, 2026}
\begin{document}
\setstretch{1.25}

\begin{abstract} We examine the Langevin diffusion confined to a closed, convex domain $D\subset\mathbb{R}^d$, represented as a reflected stochastic differential equation. We introduce a sequence of penalized stochastic differential equations and prove that their invariant measures converge, in Wasserstein--2 distance and with explicit polynomial rate, to the invariant 
measure of the reflected Langevin diffusion. We also analyze a 
time-discretization of the penalized process obtained via the Euler–Maruyama 
scheme and demonstrate the convergence to the original constrained measure. These results provide a rigorous approximation framework for reflected Langevin dynamics in both continuous and discrete time.
\end{abstract}

\maketitle
\thispagestyle{empty}
\section{Introduction}
In this work, we consider the overdamped Langevin stochastic differential equation given by
\begin{equation} \label{diffusion}
dX_t = -\nabla f(X_t)\,dt + \sigma\,dW_t,
\end{equation}
where $f: \bbR^d \to \bbR$ is a potential energy function, $\sigma > 0$ is a diffusion coefficient, and $ W_t $ is a standard Brownian motion in $ \mathbb{R}^d $. Under suitable assumptions on $f$, the process $(X_t)_{t \geq 0} $ admits a unique invariant Gibbs measure where the density is proportional to $ e^{-\frac{2}{\sigma^2}f(x)}$.

The Langevin Monte Carlo algorithm is a classical method for sampling from a target probability measure $\pi$.  
If we set 
\[
f(x) \coloneqq -\tfrac{\sigma^2}{2}\log \pi(x),
\]
then the Langevin equation
\[
dX_t = \nabla \log \pi(X_t)\,dt + \sigma\,dW_t
\]
admits $\pi$ as its unique invariant measure.  
Discretizing the SDE \eqref{diffusion} with the Euler--Maruyama scheme and step size $h>0$ yields
\begin{equation*}
X_{k+1} = X_k - h \nabla f(X_k) + \sigma\sqrt{h}\,\xi_k, 
\qquad \xi_k \sim \mathcal{N}(0,I_d),
\end{equation*}
which is known as the Unadjusted Langevin Algorithm (ULA).

If the target measure $\pi$ is supported on a closed, convex domain $D \subset \bbR^d$, then the Langevin diffusion process may evolve outside of $D$, leading to recent work on constrained Langevin Monte Carlo methods. Existing approaches enforce the constraint through a correction mechanism, such as projection onto $D$, Metropolis--Hastings accept--reject steps, or the use of mirror maps \cites{Bubeck2015, Lamperski2021-2, Hsieh2020}.  
In this paper, we instead consider the constrained Langevin diffusion defined by a reflected stochastic differential equation, a framework studied in \cites{slominski2013, Slominski2001} for reflected It\^{o} diffusion processes and more classically in \cite{Tanaka1979}. A key advantage of the reflected Langevin equation is that the constraint is enforced intrinsically through a zero--flux boundary condition on $\partial D$.  Consequently, the reflected process preserves the Gibbs measure on $D$ as its unique invariant measure and avoids the discretization bias introduced by projection-based schemes and eliminates the need for Metropolis corrections.

To analyze the reflected Langevin diffusion, we approximate it by a family of 
penalized SDEs indexed by a penalty parameter $n$.  
Each penalized process evolves in the full space $\mathbb{R}^d$ and admits an invariant measure that assigns increasing weight to the domain $D$ 
as $n\to\infty$. We establish that the invariant measures of the penalized SDEs converge, in Wasserstein distance, to the invariant measure of the reflected Langevin diffusion. We then apply the Euler--Maruyama scheme and get the \textbf{Penalized Constrained Unadjusted Langevin Algorithm (PCULA)}:
\begin{equation*} \label{pcula}
X_{k+1}^{(n,h)} = X_k^{(n,h)} - h \nabla f_n\left(X_k^{(n,h)}\right) + \sigma\sqrt{h}\,\xi_k, \quad \xi_k \sim \mathcal{N}(0, I_d)
\end{equation*} for a fixed step size $h > 0$ and fixed $n \in \bbN.$
This algorithm thus yields an implementable sampling scheme that approximates 
the invariant measure of the constrained Langevin diffusion.

\section{Reflected Langevin Diffusion via Penalization}
When considering stochastic dynamics constrained to a closed convex domain $D \subset \bbR^d$, the current Langevin diffusion process \eqref{diffusion} will evolve freely and can exit $D$. To deal with this issue, we reflect at the boundary to enforce $X_t \in D$ for all $t \geq 0$, by introducing an additional term of bounded variation that acts only when the trajectory reaches $\partial D$. Inside the domain, the evolution follows the usual Langevin diffusion and this construction yields the reflected Langevin stochastic differential equation
\begin{equation} \label{X}
X_t =X_0 + \int_{0}^{t} \sigma dW_s -\int_{0}^{t}\nabla g(X_s) ds +K_t.
\end{equation}
Here, we start from $X_0 \in D$, $X_t$ is constrained to $D$, $K_t$ is a bounded variation process with variation $||K||$ increasing only when $X_t \in \partial D$, $W_t$ is a standard $d$-dimensional Brownian motion, $\sigma > 0$ is a diffusion coefficient, and $g: \bbR^d \to \bbR$ is the potential energy function. 

Direct analysis of the reflected SDE \eqref{X} is challenging 
due to the presence of the boundary reflection term $K_t$.  
A standard approach is to replace the hard constraint $X_t \in D$ with a 
soft constraint obtained through penalization. In this formulation, the reflection is approximated by a restoring force that 
acts whenever the process moves outside the domain, with strength controlled by 
a penalty parameter. As the penalty strength grows, the trajectories remain increasingly close to $D$ and the goal is to converge on the invariant measure of the reflected Langevin SDE. Formally, for each $n \in \bbN,$ we define the penalty term as \begin{equation} \label{K}
K_t^n = -n \int_{0}^{t} \left( X_s^n - \Pi(X_s^n) \right) \,ds
\end{equation} where $\Pi(x)$ is the metric projection of $x$ to $D$. For a closed convex set $D$, the metric projection $\Pi:\mathbb{R}^d\to D$ is
single-valued, non-expansive, and characterized by the property that
$x-\Pi(x)$ is the minimal vector pushing $x$ back toward the domain.
Thus the penalization term $n(x-\Pi(x))$ provides the natural convex-analytic
restoring force that approximates the reflection mechanism in the limit
$n\to\infty$. This gives us the penalized process $X_t^n$ satisfying \begin{equation} \label{Xn}
X_t^n =X_0 + \int_{0}^{t} \sigma dW_s -\int_{0}^{t}\nabla g(X_s^n) ds +K_t^n.
\end{equation}
One can allow the Brownian motion $W_t$ to vary for each $n \in \bbN$, creating a new penalized equation with $W_t^n$; for our purposes, we keep as $W_t^n \coloneqq W_t$ as the standard $d$-dimensional Brownian motion on $\bbR^d.$ 

The penalized process \eqref{Xn} can then be reconsidered as the usual Langevin diffusion process \eqref{diffusion} with a modified potential energy function. This identification enables us to study $X_t^n$ via the well-developed theory 
of Langevin diffusion.

\begin{thm}\label{langevin form}
Let $D \subset \bbR^d$ be closed and convex, and $g \in C^1(\bbR^d)$ be our energy function. For $n \geq 1$, define $$f_n(x)= g(x) + \frac{n}{2} \dist^2(x, D).$$ Suppose $X^n$ satisfies the penalized SDE \eqref{Xn} for a Brownian motion $W^n$, with $K^n_t$ defined by \eqref{K}. Then, $X^n$ is a solution of the Langevin SDE \begin{equation} \label{standard_penalized}
dX_t^n = -\nabla f_n(X_t^n) dt + \sigma dW_t^n.
\end{equation}
Conversely, any solution of \eqref{standard_penalized} satisfies \eqref{Xn} (with $K^n$ given by \eqref{K}).
We refer to $f_n$ as the penalized energy function.
\end{thm}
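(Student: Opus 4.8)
The plan is to reduce the stated equivalence to a single convex-analytic identity: that the drift $-\nabla f_n$ coincides with the combined drift $-\nabla g - n(x-\Pi(x))$ appearing in \eqref{Xn}. Since $f_n = g + \tfrac{n}{2}\dist^2(\cdot, D)$ and the term $\nabla g$ is already present in both formulations, everything hinges on computing the gradient of the squared distance function $\phi(x) := \tfrac{1}{2}\dist^2(x, D)$.

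The key step is to establish that for a closed convex set $D$, the function $\phi$ is continuously differentiable on all of $\mathbb{R}^d$ with $\nabla\phi(x) = x - \Pi(x)$. I would prove this directly from the variational characterization of the projection. Since $\Pi(x) \in D$, the definition of the distance gives $\dist^2(y, D) \le \|y - \Pi(x)\|^2$, and expanding the square yields the one-sided estimate $\phi(y) - \phi(x) - \langle x - \Pi(x), y - x\rangle \le \tfrac{1}{2}\|y-x\|^2$. Swapping the roles of $x$ and $y$ and invoking the non-expansiveness of $\Pi$ (so that $\langle \Pi(y) - \Pi(x),\, y - x\rangle \le \|y-x\|^2$ controls the cross term) produces the matching lower bound $\ge -\tfrac{1}{2}\|y-x\|^2$. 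Together these give $|\phi(y) - \phi(x) - \langle x-\Pi(x), y-x\rangle| \le \tfrac{1}{2}\|y-x\|^2$, hence differentiability at every $x$ with the claimed gradient; continuity of $\nabla\phi$ is immediate from non-expansiveness of $\Pi$, so $f_n = g + n\phi \in C^1(\mathbb{R}^d)$.

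With the gradient identity $\nabla f_n(x) = \nabla g(x) + n(x - \Pi(x))$ in hand, the two implications become immediate substitutions. For the forward direction, I would write the Langevin SDE \eqref{standard_penalized} in integral form and replace $-\nabla f_n(X_s^n)$ by $-\nabla g(X_s^n) - n(X_s^n - \Pi(X_s^n))$; the term $-n\int_0^t (X_s^n - \Pi(X_s^n))\,ds$ is then exactly $K_t^n$ by the definition \eqref{K}, recovering \eqref{Xn}. The converse is the same computation run backwards: starting from \eqref{Xn}, substituting the definition of $K_t^n$, and recognizing the resulting drift as $-\nabla f_n$ gives \eqref{standard_penalized} in differential form.

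The main obstacle is genuinely confined to the gradient computation for $\phi$, and this is precisely where convexity of $D$ is indispensable: for non-convex $D$ the projection may be multivalued and $\dist^2(\cdot, D)$ fails to be differentiable. Once differentiability is secured, no stochastic-analytic subtleties appear, since the equivalence is pathwise and acts only on the (bounded-variation) drift; it holds for each fixed Brownian trajectory and requires no It\^o correction or regularity beyond $g \in C^1$.
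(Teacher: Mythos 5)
Your proposal is correct and follows essentially the same route as the paper: both reduce the theorem to the identity $\nabla\bigl(\tfrac{1}{2}\dist^2(\cdot,D)\bigr)(x) = x - \Pi(x)$ for closed convex $D$, and then pass between \eqref{Xn} and \eqref{standard_penalized} by the same pathwise substitution of the drift, with no stochastic subtleties involved. The only difference is that the paper cites this gradient identity from the literature, whereas you verify it self-containedly via the two-sided quadratic estimate using non-expansiveness of $\Pi$, which is a sound addition but not a different approach.
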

\begin{proof}
    For a closed convex domain $D$, the metric projection $\Pi(x)$ is single-valued and $1$-Lipschitz. The function $\psi(x) = \frac{1}{2}\dist^2(x,D)$ is $C^1(\bbR^d)$ with \[
    \nabla\psi(x) = x - \Pi(x)
    \] for all $x \in \bbR^d$ \cite{Balestro2019}. Consequently, $\nabla\psi$ is $1$-Lipschitz. From our integral equation \eqref{Xn} and definition of $K^n$, we get 
    \begin{eqnarray*}
        X^n_t & = & X_0 + \int_{0}^{t} \sigma dW_s^n -\int_{0}^{t}\nabla g(X_s^n) ds -n\int_0^t (X_s^n - \Pi(X_s^n)) ds \\
        & = & X_0 + \int_{0}^{t} \sigma dW_s^n - \int_0^t\left[ \nabla g(X_s^n) + n\nabla\psi(X_s^n) \right] ds \\
        & = & X_0 + \int_0^t \sigma dW_s^n - \int_0^t \nabla f_n(X_s^n) ds.
    \end{eqnarray*}
    This is precisely the integral form of \eqref{standard_penalized}. In differential notation, we have \[
    dX_t^n = -\nabla f_n(X_t^n) dt + \sigma dW_t^n.
    \]
    Conversely, if $X^n$ is a solution to \eqref{standard_penalized} then integrating from $0$ to $t$ gives us \[
    X_t^n = X_0 + \int_{0}^{t} \sigma dW_s^n -\int_{0}^{t}\nabla g(X_s^n) ds -n\int_0^t \nabla\psi(X_s^n) ds.
    \]
\end{proof}

Note that $\Pi(x) = x$ for $x \in D$ so $\nabla\psi(x) = 0$ inside $D$ which means the penalty vanishes, and outside of $D$, the restoring term is $-n(x - \Pi(x))$ which pulls the process back towards the domain.
\begin{rem}
    For closed convex $D$, we have $\psi \coloneqq \frac{1}{2}\dist^2(x,D) \in C^1(\bbR^d)$ and $\nabla\psi(x) = x - \Pi(x)$ is 1-Lipschitz; hence if $\nabla g$ is $L$-Lipschitz then the penalized energy function $\nabla f_n$ is $(L + n)$-Lipschitz.
\end{rem}
We are also able to obtain the convexity of $f_n$ based on the convexity of $g.$ The Lipschitz and convexity conditions on the energy functions are of particular significance as they allow us to obtain unique invariant probability measures for the reflected and penalized processes, which is explored in Section 3.
\begin{lem}\label{f convex thm}
    Let $D \subset \bbR^d$ be closed and convex, and define \[
    \psi(x) \coloneqq \frac{1}{2}\dist^2(x,D), \quad f_n(x) \coloneqq g(x) + n\psi(x),
    \] with $g \in C^1(\bbR^d).$ If $g$ is $m$-strongly convex then $f_n$ is also $m$-strongly convex on $\bbR^d$.
\end{lem}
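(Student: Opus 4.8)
The plan is to reduce the claim to the convexity of $\psi$ alone, since strong convexity combines additively with ordinary convexity. Recall that a $C^1$ function $h$ is $m$-strongly convex precisely when its gradient is $m$-strongly monotone, that is, $\langle \nabla h(x) - \nabla h(y),\, x - y\rangle \geq m\|x-y\|^2$ for all $x,y \in \bbR^d$. Since the previous theorem supplies the explicit formula $\nabla\psi(x) = x - \Pi(x)$, both $g$ and $\psi$ are $C^1$, and so is $f_n = g + n\psi$ with $\nabla f_n = \nabla g + n\nabla\psi$. Thus it suffices to establish $m$-strong monotonicity of $\nabla g$ (which is exactly the hypothesis that $g$ is $m$-strongly convex) together with mere monotonicity of $\nabla\psi$, and then add the two inequalities; the factor $n \geq 1$ only rescales a nonnegative term and does not disturb the constant $m$.

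The core step is therefore to show $\langle \nabla\psi(x) - \nabla\psi(y),\, x-y\rangle \geq 0$. Using the formula for $\nabla\psi$, I would compute
\begin{align*}
\langle \nabla\psi(x) - \nabla\psi(y),\, x - y\rangle
&= \langle (x - \Pi(x)) - (y - \Pi(y)),\, x - y\rangle\\
&= \|x-y\|^2 - \langle \Pi(x) - \Pi(y),\, x - y\rangle.
\end{align*}
It then remains to bound the cross term. Since $\Pi$ is non-expansive we have $\|\Pi(x) - \Pi(y)\| \leq \|x-y\|$, so by Cauchy--Schwarz $\langle \Pi(x) - \Pi(y),\, x - y\rangle \leq \|\Pi(x)-\Pi(y)\|\,\|x-y\| \leq \|x-y\|^2$, whence the displayed quantity is nonnegative. (One could instead invoke the sharper firm non-expansiveness $\langle \Pi(x) - \Pi(y),\, x-y\rangle \geq \|\Pi(x) - \Pi(y)\|^2$, but plain non-expansiveness already suffices.) This proves $\nabla\psi$ is monotone, hence $\psi$ is convex, and adding the strong monotonicity of $\nabla g$ to $n$ times the monotonicity of $\nabla\psi$ yields $\langle \nabla f_n(x) - \nabla f_n(y),\, x-y\rangle \geq m\|x-y\|^2$, i.e. $f_n$ is $m$-strongly convex.

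The only genuine content here is the convexity of the squared-distance function $\psi$; the rest is the additive stability of strong convexity. Accordingly, the step most worth care is the monotonicity estimate for $\nabla\psi$, where the crucial input is the non-expansiveness of the metric projection onto a closed convex set. I do not anticipate a serious obstacle, since that property of $\Pi$ is already recorded in the proof of Theorem~\ref{langevin form}; the main thing to be careful about is simply citing the correct characterization of (strong) convexity via gradient monotonicity, which is valid precisely because all functions involved are $C^1$.
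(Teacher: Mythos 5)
Your proof is correct, but it runs along a different track than the paper's. The paper stays at the level of function values: it asserts (without proof) that $\psi$ is convex because $D$ is, writes the first-order inequality $\psi(y) \geq \psi(x) + \langle \nabla\psi(x), y-x\rangle$ alongside the strong-convexity inequality for $g$, and adds $n$ times the former to the latter to get the defining inequality for $m$-strong convexity of $f_n$. You instead work at the gradient level, using the equivalence (valid for $C^1$ functions) between $m$-strong convexity and $m$-strong monotonicity of the gradient, and — crucially — you do not merely cite the convexity of $\psi$ but derive it: the identity $\langle \nabla\psi(x)-\nabla\psi(y), x-y\rangle = \|x-y\|^2 - \langle \Pi(x)-\Pi(y), x-y\rangle$ combined with non-expansiveness of $\Pi$ and Cauchy--Schwarz gives monotonicity of $\nabla\psi$ directly. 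What your version buys is self-containedness: the one nontrivial ingredient (convexity of the squared distance to a convex set) is proved rather than invoked, using only the projection properties already recorded in the proof of Theorem~\ref{langevin form}. What the paper's version buys is brevity: by working with the first-order inequalities, the additivity step is immediate and no monotonicity characterization needs to be stated. Both arguments are complete modulo their cited inputs, and both hinge on the same structural fact that strong convexity is stable under addition of a convex term.
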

\begin{proof}
    Since $D$ is convex, $\psi$ is convex on $\bbR^d$ so for all $x,y \in \bbR^d$, we have \begin{equation}\label{psi convex}
        \psi(y) \geq \psi(x) + \langle \nabla\psi(x), y-x \rangle.
    \end{equation}
    Since $g$ is $m$-strongly convex, we have \begin{equation}\label{g convex}
        g(y) \geq g(x) + \langle \nabla g(x), y-x \rangle + \frac{m}{2}||y-x||^2
    \end{equation} for all $x,y \in \bbR^2$.
    So, since $\nabla f_n = \nabla g + n \nabla\psi$, adding $n$ times \eqref{psi convex} to \eqref{g convex} yields \[
    f_n(y) \geq f_n(x) + \langle \nabla f_n(x), y-x\rangle + \frac{m}{2}||y-x||^2.
    \] Thus, $f_n$ is $m$-strongly convex, for all $n \in \bbN$.
\end{proof}

\section{Convergence to a Unique Invariant Measure in Wasserstein-2 Distance}
Having established the convexity of the penalized energy functions $f_n$, we now turn to the analysis of the convergence properties of the laws of the penalized Langevin SDE \eqref{standard_penalized} to a unique invariant measure. To study the convergence, we introduce the Wasserstein distance on the space of probability measures which quantifies the minimal transport cost to move from one distribution to another. 

Let $\mathcal{B}(\bbR^d)$ denote the Borel $\sigma$-algebra on $\bbR^d$ and let $\mathcal{P}_2(\bbR^d)$ denote the set of all Borel probability measures with finite second moment. 
\begin{defn}[Coupling]
Let $\mu$ and $\nu$ be probability measures on $(\bbR^d, \mathcal{B}(\bbR^d)).$ A \textit{coupling} (or \textit{transport plan}) between $\mu$ and $\nu$ is a probability measure $\zeta$ on $(\bbR^d \times \bbR^d, \mathcal{B}(\bbR^d) \otimes \mathcal{B}(\bbR^d))$ whose marginals are $\mu$ and $\nu$. That is, \[
\zeta(A \times \bbR^d) = \mu(A), \quad \zeta(\bbR^d \times A) = \nu(A) \quad \text{for all measurable }A \subset \bbR^d.
\]  We denote by $\Omega(\mu, \nu)$ the set of all such couplings between $\mu$ and $\nu.$  
\end{defn}

Equivalently, a pair of $\bbR^d$ random variables $(X,Y)$ is said to be a coupling of $\mu$ and $\nu$ if there exists $\zeta \in \Omega(\mu, \nu)$ such that the joint distribution of $(X,Y)$ is $\zeta$, that is, $X \sim \mu$ and $Y \sim \nu.$

\begin{defn}[Wasserstein Distance of order 2] For two probability measures $\mu$ and $\nu$ on $(\bbR^d, \mathcal{B}(\bbR^d)),$ the \textit{Wasserstein distance of order 2} between $\mu$ and $\nu$ is defined as \begin{equation}\label{eq:W2}
W_2(\mu,\nu)
:= \left( \inf_{\zeta \in \Omega(\mu,\nu)}
\int_{\mathbb{R}^d \times \mathbb{R}^d} ||x-y||^2 , d\zeta(x,y) \right)^{1/2}.
\end{equation}
    
\end{defn}

This is also called the \textit{Wasserstein-2 distance}, and $\mathcal{P}_2(\bbR^d)$ forms a complete, separable metric space under the $W_2$ metric. Equivalently, for any coupling $(X,Y)$ of $\mu$ and $\nu$,
\begin{equation}\label{eq:W2_expectation}
W_2(\mu,\nu)
= \inf_{(X,Y)} \big( \mathbb{E}\big[||X-Y||^2\big] \big)^{1/2},
\end{equation} where the infimum is taken over all joint distributions of pairs $(X,Y)$ with respective laws $\mu$ and $\nu$ \cite{Villani2009}.

\begin{thm} \label{penalized contraction}
    Let $\mu_0, \nu_0 \in \mathcal{P}_2(\bbR^d)$ be two probability measures, and let $D \subset \bbR^d$ be closed and convex. Suppose $g \in C^1(\bbR^d)$ is $m$-strongly convex. For some $n \in \bbN$, let $X_t^n, Y_t^n \in D$ denote two respective solutions to the penalized Langevin SDE \eqref{Xn} driven by the same Brownian motion $W_t$ and energy function $g$ with initial distributions $X_0 \sim \mu_0$ and $Y_0 \sim \nu_0.$ Then, for all $t \geq 0$, we have the following contraction result: \begin{equation}
        W_2(\mu_t^n, \nu_t^n) \leq e^{-mt}W_2(\mu_0, \nu_0)
    \end{equation} where $\mu_t^n$ and $\nu_t^n$ denote the laws of $X_t^n$ and $Y_t^n$ respectively.
\end{thm}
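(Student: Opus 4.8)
The plan is to prove the contraction via a synchronous coupling argument, which is standard for strongly convex potentials. Since both $X_t^n$ and $Y_t^n$ are driven by the \emph{same} Brownian motion $W_t$, I would set $Z_t \coloneqq X_t^n - Y_t^n$ and observe that the stochastic terms cancel. By Theorem~\ref{langevin form}, each process solves the Langevin SDE \eqref{standard_penalized} with drift $-\nabla f_n$, so
\[
dZ_t = -\bigl(\nabla f_n(X_t^n) - \nabla f_n(Y_t^n)\bigr)\,dt,
\]
which is a pathwise ordinary differential equation with no noise. This is the key simplification: the common Brownian motion eliminates the martingale part entirely, so no It\^o correction term survives and the analysis reduces to a deterministic Gr\"onwall estimate along each sample path.

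Next I would compute the evolution of the squared distance $\|Z_t\|^2$. Since $t \mapsto Z_t$ is (almost surely) absolutely continuous, I can differentiate directly:
\[
\frac{d}{dt}\|Z_t\|^2 = 2\langle Z_t, \dot{Z}_t\rangle
= -2\bigl\langle X_t^n - Y_t^n,\ \nabla f_n(X_t^n) - \nabla f_n(Y_t^n)\bigr\rangle.
\]
Here the central step is the monotonicity estimate supplied by strong convexity. By Lemma~\ref{f convex thm}, $f_n$ is $m$-strongly convex (inheriting the constant from $g$), and $m$-strong convexity of a $C^1$ function is equivalent to the $m$-strong monotonicity of its gradient, giving
\[
\bigl\langle \nabla f_n(x) - \nabla f_n(y),\ x-y\bigr\rangle \ge m\|x-y\|^2
\quad\text{for all } x,y\in\bbR^d.
\]
Applying this to $x = X_t^n$, $y = Y_t^n$ yields the differential inequality $\frac{d}{dt}\|Z_t\|^2 \le -2m\|Z_t\|^2$.

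From here Gr\"onwall's inequality gives $\|Z_t\|^2 \le e^{-2mt}\|Z_0\|^2$ pathwise, hence $\|X_t^n - Y_t^n\| \le e^{-mt}\|X_0 - Y_0\|$ almost surely. The final step transfers this pathwise bound to the Wasserstein distance. I would choose $(X_0, Y_0)$ to realize an \emph{optimal} coupling of $\mu_0$ and $\nu_0$, so that $\mathbb{E}[\|X_0-Y_0\|^2] = W_2(\mu_0,\nu_0)^2$; then the synchronous coupling $(X_t^n, Y_t^n)$ is an admissible (not necessarily optimal) coupling of the laws $\mu_t^n, \nu_t^n$, so by the definition \eqref{eq:W2_expectation},
\[
W_2(\mu_t^n, \nu_t^n)^2 \le \mathbb{E}\bigl[\|X_t^n - Y_t^n\|^2\bigr]
\le e^{-2mt}\,\mathbb{E}\bigl[\|X_0-Y_0\|^2\bigr]
= e^{-2mt}\,W_2(\mu_0,\nu_0)^2,
\]
and taking square roots finishes the proof.

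The main obstacle I anticipate is technical rather than conceptual: justifying that $\|Z_t\|^2$ is differentiable and that one may apply the monotonicity bound pathwise. Because $\nabla f_n$ is only $(L+n)$-Lipschitz (per the Remark) rather than smoother, and because $\nabla\psi$ is built from the projection $\Pi$, one should confirm that the coupled system has a well-defined pathwise solution and that the cancellation of the noise is rigorous. One clean way around any differentiability concern is to apply It\^o's formula to $\|Z_t\|^2$ directly on the coupled SDE system and note that the quadratic-variation term vanishes because $X^n$ and $Y^n$ share the same driving noise; this avoids invoking almost-sure absolute continuity by hand and makes the $-2m\|Z_t\|^2$ drift bound immediate.
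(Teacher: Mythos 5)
Your proposal is correct and follows essentially the same route as the paper: synchronous coupling with cancellation of the common noise, the strong-monotonicity bound from Lemma~\ref{f convex thm}, Gr\"onwall's inequality, and transfer to $W_2$ via \eqref{eq:W2_expectation}. The only cosmetic difference is that you fix an optimal initial coupling up front while the paper bounds along an arbitrary coupling and then takes the infimum; these are interchangeable.
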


\begin{proof}
    Here, we can follow a similar argument to \cite{Bolley2012}. Consider two copies of the penalized diffusion $X_t^n$ and $Y_t^n$ driven by the same Brownian motion $W_t$ starting from $X_0$ with law $\mu_0$ and $Y_0$ with law $\nu_0$ respectively. Then, $X_t^n$ and $Y_t^n$ then satisfy \begin{equation*}
        dX_t^n = -\nabla f_n(X_t^n)dt + \sigma dW_t, \qquad dY_t^n = -\nabla f_n(Y_t^n)dt + \sigma dW_t
    \end{equation*} by Theorem \ref{langevin form}, where $f_n = g(x) + \frac{n}{2}\dist^2(x,D)$. Applying Ito's formula to the squared distance yields \begin{equation*}
        \frac{d}{dt}||X_t^n - Y_t^n||^2 = -2\langle \nabla f_n(X_t^n) - \nabla f_n(Y_t^n), X_t^n - Y_t^n \rangle.
    \end{equation*}
    Since $f_n$ is strongly $m$-convex by Lemma \ref{f convex thm}, then for all $x,y \in \bbR^d$, \begin{equation*}
        m ||x-y||^2 \leq \langle \nabla f_n(x) - \nabla f_n(y), x - y \rangle,
    \end{equation*}
    so we have that \begin{equation*}
        \frac{d}{dt}||X_t^n - Y_t^n||^2 \leq -2m||X_t^n - Y_t^n||^2.
    \end{equation*}
    Next, by Gr\"{o}nwall's Lemma,  \begin{equation*}
        ||X_t^n - Y_t^n||^2 \leq e^{-2mt}||X_0 - Y_0||^2
    \end{equation*}
    so $\mathbb{E}[||X_t^n - Y_t^n||^2] \leq e^{-2mt}\mathbb{E}[||X_0 - Y_0||^2]$, and this holds for any arbitrary coupling $(X_0, Y_0)$ of $\mu_0$ and $\nu_0$. Applying \ref{eq:W2_expectation} gives us
    \begin{equation*}
        \left(W_2(\mu_t^n, \nu_t^n)\right)^2 \leq \mathbb{E}[||X_t^n - Y_t^n||^2],
    \end{equation*}
    and taking the infimum over all couplings of $X_0, Y_0$ with laws $\mu_0, \nu_0$ implies that \begin{equation*}
        \left(W_2(\mu_t^n, \nu_t^n)\right)^2 \leq e^{-2mt}\left(W_2(\mu_0, \nu_0)\right)^2.
    \end{equation*} Taking the square root completes the proof.
\end{proof}

Our primary interest is to use the penalized Langevin SDEs to converge on the invariant measure of the reflected Langevin SDE \eqref{X}. For the Langevin equation driven by a potential energy function $f:\mathbb{R}^d\to\mathbb{R}$,
\[
dX_t = -\nabla f(X_t)\,dt + \sigma\,dW_t,
\]
the infinitesimal generator $L$ acts on test functions 
$\psi\in C^2(\mathbb{R}^d)$ as
\[
L\psi(x) = -\nabla f(x)\cdot\nabla\psi(x)
            + \frac{\sigma^2}{2}\,\Delta\psi(x).
\]
A probability measure $\pi$ is invariant for the process if and only if it
satisfies the stationary Fokker--Planck equation $L^*\pi = 0$, where $L^*$ denotes the $L^2$–adjoint of $L$. Moreover, for the Langevin generator, with respect to the inner product of $L^2(\pi)$, $L$ is self-adjoint \cite{Kostic2024}.

\begin{cor}\label{inv Xn}
Assume $D \subset \bbR^d$ is closed and convex, and $g$ is $m$-strongly convex. For $n \geq 1$, let $X_t^n$ be a solution to the penalized Langevin SDE \eqref{Xn}. Then, there exists a unique invariant probability measure $\pi^n$ with density
\begin{equation*}
    \frac{d\pi^n}{dx}(x) = \frac{1}{Z^n}\exp\left( -\frac{2}{\sigma^2} f_n(x) \right), \quad Z^n = \int_{\bbR^d} \exp\left( -\frac{2}{\sigma^2} f_n(x) \right) dx < \infty.
\end{equation*}
Furthermore, for every initial law $\mu_0 \in \mathcal{P}_2(\bbR^d)$, \begin{equation*}
    W_2(\mu_t^n, \pi^n) \leq e^{-mt}W_2(\mu_0, \pi^n), \quad t \geq 0
\end{equation*}
where $\mu_t^n$ is the law of $X_t^n.$
\end{cor}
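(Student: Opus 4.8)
The plan is to exhibit the invariant measure explicitly as the claimed Gibbs measure, confirm it is a genuine element of $\mathcal{P}_2(\bbR^d)$, and then invoke the contraction estimate of Theorem \ref{penalized contraction} to extract both uniqueness and the stated convergence rate. Since Theorem \ref{langevin form} identifies the penalized process as the standard Langevin diffusion \eqref{standard_penalized} with potential $f_n$, any stationary density $\rho$ must satisfy the Fokker--Planck equation $L^*\rho=0$ for the generator $L$ attached to $f_n$, and the natural candidate is $\rho(x)=\frac{1}{Z^n}\exp\!\left(-\frac{2}{\sigma^2}f_n(x)\right)$.

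First I would verify invariance by a probability-current computation. From the chain rule, $\frac{\sigma^2}{2}\nabla\rho = -\rho\,\nabla f_n$, so the stationary current $J := \rho\,\nabla f_n + \frac{\sigma^2}{2}\nabla\rho$ vanishes identically. Testing against $\psi\in C_c^\infty(\bbR^d)$ and integrating by parts once gives $\int \rho\,L\psi\,dx = -\int \nabla\psi\cdot J\,dx = 0$, so $\pi^n$ with density $\rho$ is invariant in the weak sense. This argument needs only that $f_n\in C^1(\bbR^d)$ (hence $\rho\in C^1$), which is exactly the regularity supplied by Theorem \ref{langevin form}, so the identity $J\equiv 0$ does not require any second-order smoothness of $\psi=\tfrac12\dist^2(\cdot,D)$.

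Second, I would establish $Z^n<\infty$ and $\pi^n\in\mathcal{P}_2(\bbR^d)$ using strong convexity. By Lemma \ref{f convex thm}, $f_n$ is $m$-strongly convex; since it is $C^1$ and coercive it attains its minimum at a unique $x^\ast$ with $\nabla f_n(x^\ast)=0$, and strong convexity yields the quadratic lower bound $f_n(x)\ge f_n(x^\ast)+\frac{m}{2}\|x-x^\ast\|^2$. Hence $\exp\!\left(-\frac{2}{\sigma^2}f_n(x)\right)\le C\exp\!\left(-\frac{m}{\sigma^2}\|x-x^\ast\|^2\right)$, a Gaussian-tailed bound that makes $Z^n$ finite and forces a finite second moment, so $\pi^n\in\mathcal{P}_2(\bbR^d)$.

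Finally, uniqueness and convergence both descend from the contraction. For uniqueness, if $\tilde\pi\in\mathcal{P}_2(\bbR^d)$ were a second invariant measure, then running two copies of the process from $\pi^n$ and $\tilde\pi$ keeps both laws stationary, so Theorem \ref{penalized contraction} gives $W_2(\pi^n,\tilde\pi)\le e^{-mt}W_2(\pi^n,\tilde\pi)$ for every $t\ge 0$, which forces $W_2(\pi^n,\tilde\pi)=0$ and $\tilde\pi=\pi^n$. For the rate, I would apply Theorem \ref{penalized contraction} with initial laws $\mu_0$ and $\nu_0=\pi^n$; invariance gives $\nu_t^n=\pi^n$, yielding $W_2(\mu_t^n,\pi^n)\le e^{-mt}W_2(\mu_0,\pi^n)$. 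The main obstacle is making the invariance step fully rigorous at the available $C^1$ regularity and confirming $\pi^n\in\mathcal{P}_2(\bbR^d)$, since the contraction argument only delivers uniqueness within the finite-second-moment class and requires $\pi^n$ to lie there for the convergence bound to even be finite.
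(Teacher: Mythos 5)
Your proposal is correct and follows essentially the same route as the paper: identify the Gibbs density as the candidate via Theorem \ref{langevin form}, verify invariance by integration by parts, use the $m$-strong convexity from Lemma \ref{f convex thm} for the quadratic lower bound giving $Z^n<\infty$, and obtain uniqueness and the rate from the contraction in Theorem \ref{penalized contraction} with $\nu_0=\pi^n$. The only differences are that you fill in two details the paper handles by citation or leaves implicit — the self-contained uniqueness argument (two invariant measures in $\mathcal{P}_2$ contract to each other, forcing $W_2=0$) and the explicit check that $\pi^n$ has finite second moment, which is indeed needed for that argument and for the stated bound to be finite.
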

\begin{proof}
    First note that by Theorem \ref{langevin form}, $X_t^n$ satisfies $dX_t^n = -\nabla f_n(X_t^n)dt + \sigma dW_t^n$ where $f_n(x) = g(x) + \frac{n}{2}\dist^2(x,D)$. Then, $f_n$ is $m$-strongly convex by Lemma \ref{f convex thm} which implies that $f_n(x) \geq \frac{m}{2}||x||^2 - c$ for some $c > 0$ so \begin{equation*}
        Z^n = \int_{\bbR^d}\exp\left( -\frac{2}{\sigma^2} f_n(x) \right) dx < \infty.
    \end{equation*}
    The Gibbs measure $\pi^n$ is therefore well defined, and it is invariant for the generator $L = -\nabla f_n \cdot \nabla + \frac{\sigma^2}{2} \Delta$ via integration-by-parts.

    Additionally, $X_t^n$ defines a Markov semi-group that is contractive in the $W_2$ distance by Theorem \ref{penalized contraction}. Since $\pi^n$ is invariant for this semi-group, it is unique \cites{Durmus2019,Ambrosio2009}. Taking $\nu_0 = \pi^n$ in Theorem \ref{penalized contraction} gives $W_2(\mu_t^n, \pi^n) \leq e^{-mt}W_2(\mu_0,\pi^n)$.
\end{proof}

From here, we now study the contraction results for the reflected Langevin stochastic differential equation \eqref{X}. We also introduce necessary Lipschitz and geometric drift conditions on the potential energy function to construct the bounds, as done in \cites{Durmus2019, Eberle2019, Lamperski2021, slominski2013, Slominski2001}.

\begin{thm}\label{error contraction}
    Let $D \subset \bbR^d$ be closed and convex. Assume \begin{enumerate}
        \item[(A1)] $g \in C^1(\bbR^d)$ is $m$-strongly convex with $\nabla g$ globally $L$-Lipschitz: \[
        ||\nabla g(x) - \nabla g(y)|| \leq L ||x-y|| \quad \text{ for all } x,y \in \bbR^d.
        \] 
        \item[(A2)] there exists $R > 0$ such that \[
        \sigma^2 + ||\nabla g(x)||^2 \leq R(1 + ||x||^2) \quad \text{ for all } x \in \bbR^d.
        \]
    \end{enumerate} Let $X_t, Y_t \in D$ be solutions of the RSDE \eqref{X}, driven by the same Brownian motion $W_t$, \begin{eqnarray*}
        X_t &= &X_0 + \int_{0}^{t} \sigma dW_s -\int_{0}^{t}\nabla g(X_s) ds +K_t, \\ Y_t &=& Y_0 + \int_{0}^{t} \sigma dW_s -\int_{0}^{t}\nabla g(Y_s) ds + \hat{K_t}
    \end{eqnarray*}
    with initial laws $\mu_0, \nu_0 \in \mathcal{P}_2(\bbR^d).$
    Let $X_t^n, Y_t^n$ be the penalized solutions in \eqref{Xn}, also driven by the same process $W_t$ and initial laws,
    \begin{eqnarray*}
        X_t^n &=& X_0 + \int_{0}^{t} \sigma dW_s -\int_{0}^{t}\nabla g(X_s^n) ds +K_t^n, \\ Y_t^n &= &Y_0 + \int_{0}^{t} \sigma dW_s -\int_{0}^{t}\nabla g(Y_s^n) ds + \hat{K_t^n}.
    \end{eqnarray*}
   Then, for all $t \geq 0$, we have the following contraction result 
    \begin{equation*}
        W_2(\mu_t, \nu_t) \leq e^{-mt}W_2(\mu_0, \nu_0) + K\left( \frac{\log n}{n} \right)^{1/4}
    \end{equation*}
    where $\mu_t$ and $\nu_t$ are the respective laws of $X_t$ and $Y_t,$ and $K > 0.$
\end{thm}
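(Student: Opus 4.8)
The plan is to interpose the two penalized processes between the reflected ones and use the triangle inequality for $W_2$. Writing $\mu_t^n, \nu_t^n$ for the laws of the penalized solutions $X_t^n, Y_t^n$ (which share the initial laws $\mu_0,\nu_0$ and the driving Brownian motion $W_t$), I would estimate
\[
W_2(\mu_t, \nu_t) \leq W_2(\mu_t, \mu_t^n) + W_2(\mu_t^n, \nu_t^n) + W_2(\nu_t^n, \nu_t).
\]
The middle term is immediately controlled by Theorem \ref{penalized contraction}, which gives $W_2(\mu_t^n, \nu_t^n) \leq e^{-mt}W_2(\mu_0, \nu_0)$. It therefore remains to bound the two penalization errors $W_2(\mu_t, \mu_t^n)$ and $W_2(\nu_t, \nu_t^n)$ uniformly in $t$, so the entire result reduces to a single quantitative estimate of the form $\sup_{t \geq 0} W_2(\mu_t, \mu_t^n) \leq K'(\log n / n)^{1/4}$, with $K$ then absorbing all constants.

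To obtain this estimate I would couple $X_t$ and $X_t^n$ synchronously: run both from the same initial point (so $\Delta_0 := X_0 - X_0^n = 0$) and the same $W_t$, so that the martingale parts cancel and $\Delta_t := X_t - X_t^n$ has finite variation. Applying It\^o's formula to $\|\Delta_t\|^2$ removes the diffusion term and yields
\[
d\|\Delta_t\|^2 = -2\langle \Delta_t, \nabla g(X_t) - \nabla g(X_t^n)\rangle\,dt + 2\langle \Delta_t, dK_t\rangle - 2\langle \Delta_t, dK_t^n\rangle.
\]
The first inner product is handled by the $m$-strong convexity of $g$ from (A1), producing the dissipative term $-2m\|\Delta_t\|^2\,dt$. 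For the reflection term I would split $\Delta_t = (X_t - \Pi(X_t^n)) + (\Pi(X_t^n) - X_t^n)$: since $\Pi(X_t^n)\in D$ and $dK_t$ is an inward normal on $\partial D$, the Skorokhod condition makes the first piece nonpositive, while the second is bounded by $\dist(X_t^n, D)\,d|K|_t$. For the penalization term, $dK_t^n = -n(X_t^n - \Pi(X_t^n))\,dt$, and the variational inequality for projection onto the convex set $D$ (tested against $X_t\in D$) gives $-2\langle \Delta_t, dK_t^n\rangle \leq -2n\,\dist^2(X_t^n, D)\,dt \leq 0$. Collecting these and applying Gr\"onwall with the $e^{-2mt}$ factor leaves
\[
\mathbb{E}\|\Delta_t\|^2 \leq 2\,\mathbb{E}\!\left[\int_0^t e^{-2m(t-s)}\dist(X_s^n, D)\,d|K|_s\right].
\]

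The crux, and where I expect the real difficulty, is controlling this cross-term between the reflecting local time $|K|$ and the penetration $\dist(X_s^n, D)$ of the penalized trajectory. I would first use the growth condition (A2) to establish uniform-in-$n$ second-moment bounds on $X_t^n$ and on $\mathbb{E}[|K|_t]$, ensuring all laws stay in $\mathcal{P}_2(\bbR^d)$ and the reflection term does not blow up. I would then invoke the penalization estimates of S\l omi\'nski \cites{slominski2013, Slominski2001}, whose stationary analogue is already visible here: under $\pi^n$ the density is suppressed off $D$ by $\exp(-\tfrac{n}{\sigma^2}\dist^2)$, so the typical penetration depth is $O(n^{-1/2})$, while a maximal inequality over the relevant boundary excursions contributes the logarithmic correction, yielding $\mathbb{E}[\sup_s \dist^2(X_s^n, D)] \lesssim \log n / n$. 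Combining this penetration bound with Cauchy--Schwarz on the cross-term produces $\sup_t \mathbb{E}\|\Delta_t\|^2 \lesssim (\log n / n)^{1/2}$, hence $\sup_t W_2(\mu_t, \mu_t^n) \lesssim (\log n / n)^{1/4}$; the identical argument bounds $W_2(\nu_t, \nu_t^n)$, and summing the three pieces completes the proof. The genuine obstacle is making the cross-term bound rigorous and uniform in time---in particular handling $\int_0^t \dist(X_s^n, D)\,d|K|_s$ rather than just the pointwise penetration---since the reflecting measure $d|K|_s$ is singular and correlated with the penalized penetration, which is precisely what forces reliance on the S\l omi\'nski penalization machinery rather than an elementary Gr\"onwall argument alone.
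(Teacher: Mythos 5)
Your proposal follows essentially the same route as the paper: the identical triangle-inequality decomposition through the penalized laws $\mu_t^n,\nu_t^n$, Theorem \ref{penalized contraction} for the middle term, and the penalization error bound $\mathbb{E}\bigl[\sup_{s\le t}\|X_s^n-X_s\|^2\bigr]^{1/2}\lesssim(\log n/n)^{1/4}$ of S{\l}omi\'nski (Theorem 4.1 of \cite{slominski2013}) for the two outer terms, which the paper simply cites rather than re-derives. Your synchronous-coupling computation is a plausible sketch of what lies inside that cited theorem, but since you yourself defer the crux (the cross-term involving $d|K|_s$ and the penetration bound) to the same S{\l}omi\'nski machinery, the two arguments coincide in substance.
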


\begin{proof}
    Since $\mathcal{P}_2(\bbR^d)$ is a complete metric space under the Wasserstein-2 distance, the triangle inequality gives us \[
    W_2(\mu_t, \nu_t) \leq W_2(\mu_t, \mu_t^n) + W_2(\mu_t^n, \nu_t^n) + W_2(\nu_t^n, \nu_t),
    \]
    where $\mu_t^n$ and $\nu_t^n$ are the laws of $X_t^n$ and $Y_t^n$ respectively. Then, by Theorem \ref{penalized contraction} \[
    W_2(\mu_t^n, \nu_t^n) \leq e^{-mt}W_2(\mu_0, \nu_0).
    \]
    Using Theorem 4.1 of \cite{slominski2013}, under assumptions (A1) and (A2), we have that there exists constants $C_1$ and $C_2$ such that \begin{eqnarray*}
        \mathbb{E}\left[\sup_{0 \leq s \leq t} ||X_s^n -X_s||^2\right]^{\frac{1}{2}}  &\leq& C_1 \left(\frac{\log n}{n}\right)^{\frac{1}{4}}, \\ \mathbb{E}\left[\sup_{0 \leq s \leq t} ||Y_s^n -Y_s||^2\right]^{\frac{1}{2}}  &\leq& C_2 \left(\frac{\log n}{n}\right)^{\frac{1}{4}}.
    \end{eqnarray*}
    
    By \eqref{eq:W2_expectation}, for any coupling we know $(W_2(\mu_t^n, \mu_t))^2 \leq \mathbb{E}||X_t^n - X_t||^2.$ Since $||X_t^n - X_t|| \leq \sup_{s\leq t}||X_s^n - X_s||,$ we get \[
    W_2(\mu_t^n, \mu_t) \leq C_1 \left( \frac{\log n}{n} \right)^{1/4}, \quad W_2(\nu_t^n, \nu_t) \leq C_2 \left( \frac{\log n}{n} \right)^{1/4}.
    \]
    Putting the bounds together, \begin{equation*}
        W_2(\mu_t, \nu_t) \leq e^{-mt}W_2(\mu_0, \nu_0) + (C_1 + C_2)\left( \frac{\log n}{n} \right)^{1/4}.
    \end{equation*}
\end{proof}

\begin{cor}
    Under the assumptions of Theorem \ref{error contraction}, \begin{equation*}
        W_2(\mu_t, \nu_t) \leq e^{-mt}W_2(\mu_0, \nu_0), \quad t \geq 0.
    \end{equation*}
\end{cor}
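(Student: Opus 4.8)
The plan is to send the penalty parameter $n$ to infinity in the estimate of Theorem \ref{error contraction}. The crucial structural observation is that the reflected processes $X_t, Y_t$, and hence their laws $\mu_t, \nu_t$, are defined by the RSDE \eqref{X}, which makes no reference to $n$; likewise the quantity $e^{-mt}W_2(\mu_0,\nu_0)$ depends only on the fixed strong-convexity constant $m$, the fixed time $t$, and the fixed initial laws. Thus the only genuinely $n$-dependent quantity in the inequality
\[
W_2(\mu_t,\nu_t) \leq e^{-mt}W_2(\mu_0,\nu_0) + K\left(\frac{\log n}{n}\right)^{1/4}
\]
is the penalization error term on the right.

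Next I would note that this inequality is valid for \emph{every} $n \in \bbN$, since Theorem \ref{error contraction} is established for an arbitrary penalty level, and that for a fixed horizon $t$ the constant $K = C_1 + C_2$ is independent of $n$ (the constants $C_1, C_2$ arising from Theorem 4.1 of \cite{slominski2013} depend only on $t$ and on the fixed data $g, \sigma, D$). Consequently the left-hand side, a fixed nonnegative number for fixed $t$, is bounded above by a quantity that can be made arbitrarily close to $e^{-mt}W_2(\mu_0,\nu_0)$ by choosing $n$ large.

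To conclude, I would pass to the limit $n \to \infty$ on the right-hand side. Since $\log n / n \to 0$, the error term $K(\log n/n)^{1/4} \to 0$, and because the left-hand side does not depend on $n$, it must satisfy
\[
W_2(\mu_t,\nu_t) \leq e^{-mt}W_2(\mu_0,\nu_0).
\]
There is essentially no analytic obstacle: the only point requiring care is recognizing that the bound holds uniformly in $n$ with an $n$-independent prefactor $K$ at each fixed $t$, which makes the limit legitimate. The corollary then reads as the statement that the reflected Langevin diffusion is itself a $W_2$-contraction with rate $e^{-mt}$, the penalized family having served purely as an approximation device whose discretization-type error vanishes in the limit.
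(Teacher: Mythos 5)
Your proposal is correct and follows exactly the paper's argument: fix $t$, observe that only the error term in Theorem \ref{error contraction} depends on $n$, and let $n \to \infty$ so that $K(\log n/n)^{1/4} \to 0$. The paper states this in two lines; your additional care in noting that $K$ is independent of $n$ at fixed $t$ is a sensible (and implicitly assumed) refinement, not a different route.
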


\begin{proof}
    Let $n \to \infty$ in Theorem \ref{error contraction}. Since $(\log n/n)^{1/4} \to 0,$ the error term vanishes.
\end{proof}

For reflected Langevin SDE \eqref{X} \begin{equation*}
    X_t =X_0 + \int_{0}^{t} \sigma dW_s -\int_{0}^{t}\nabla g(X_s) ds +K_t,
\end{equation*}
there has been recent work to determine under which assumptions an RSDE admits a unique invariant probability measure $\pi$. The full details are contained in \cite{Eberle2019} and \cite{Lamperski2021}. In particular, for reflected stochastic differential equations satisfying particular growth conditions in III.B of \cite{Lamperski2021}, there exists constants $c > 0$ and $C \geq 1$ such that for all $t \geq 0$ and initial laws $\mu_0 \in \mathcal{P}_2(\bbR^d),$ there is a unique invariant measure $\pi$ such that \begin{equation}\label{inv Xt}
    W_2(\mu_t, \pi) \leq Ce^{-ct}W_2(\mu_0, \pi).
\end{equation}
The growth conditions in III.B of \cite{Lamperski2021} for \eqref{X} are fulfilled for $g \in C^1(\bbR^d)$ under assumptions (A1) and (A2) from Theorem \ref{error contraction}. 

\begin{rem}
A direct computation using the stationary Fokker--Planck equation with
the Neumann (zero‐flux) boundary condition shows that the reflected Langevin SDE \eqref{X} has invariant density proportional to $e^{-\frac{2}{\sigma^2}g(x)}\mathbf{1}_{\{x\in D\}}$. Thus, the invariant measure $\pi$ of the reflected Langevin SDE coincides with the intended constrained Gibbs
measure associated with the Langevin equation \eqref{diffusion}. Since existence and uniqueness of the reflected SDE is guaranteed under our assumptions on $D$, this identifies the law of the reflected process with the truncated Gibbs measure \cites{Tanaka1979, Lions1984}.
\end{rem}

\begin{thm}\label{final bound}
Under the assumptions of Theorem \ref{error contraction}, let $\pi$ denote the unique invariant measure of the reflected SDE \eqref{X}, and for $n \geq 1$, let $\pi^n$ denote the unique invariant measure of the penalized process \eqref{Xn}, as in Corollary \ref{inv Xn}. Then, there exists constants $C > 0$ and $\delta > 0$ such that
\[
W_2(\pi, \pi^n) \leq C n^{-\delta}.
\]
\end{thm}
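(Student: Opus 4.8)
The plan is to bound $W_2(\pi,\pi^n)$ by running both the reflected diffusion \eqref{X} and the penalized diffusion \eqref{Xn} from a \emph{common stationary start}, coupled through the same Brownian motion, and then to exploit the $W_2$-contraction of the penalized semigroup toward $\pi^n$ in an absorbing argument. First I would record that both invariant measures lie in $\mathcal{P}_2(\bbR^d)$: for $\pi^n$ this follows from Corollary \ref{inv Xn}, whose $m$-strongly convex potential $f_n$ satisfies $f_n(x)\ge \tfrac{m}{2}\|x\|^2-c$ and hence forces a Gaussian-type tail; and $\pi\propto e^{-2g/\sigma^2}\mathbf 1_{D}$ is likewise in $\mathcal{P}_2$ since $g$ is $m$-strongly convex. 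In particular $W_2(\pi,\pi^n)<\infty$, which is what makes the rearrangement below legitimate.

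Next I would couple the two processes by taking the same random initial condition $X_0=X_0^n=\xi$ with $\xi\sim\pi$ and driving both by the same $W_t$. Because $\pi$ is invariant for the reflected SDE, $\mathcal{L}(X_t)=\pi$ for all $t$; write $\mu_t^n:=\mathcal{L}(X_t^n)$. I would then fix the single time $t_0:=(\log 2)/m$, so that $e^{-mt_0}=\tfrac12$, and apply the triangle inequality
\[
W_2(\pi,\pi^n)\le W_2\big(\mathcal{L}(X_{t_0}),\mu_{t_0}^n\big)+W_2(\mu_{t_0}^n,\pi^n).
\]
For the first term, the coupling bound \eqref{eq:W2_expectation} together with the strong-approximation estimate of Theorem 4.1 of \cite{slominski2013} (valid under (A1)--(A2) and applicable since $\xi\in L^2$) gives
\[
W_2\big(\mathcal{L}(X_{t_0}),\mu_{t_0}^n\big)\le \mathbb{E}\big[\|X_{t_0}-X_{t_0}^n\|^2\big]^{1/2}\le \mathbb{E}\Big[\sup_{s\le t_0}\|X_s-X_s^n\|^2\Big]^{1/2}\le C_1\Big(\tfrac{\log n}{n}\Big)^{1/4},
\]
while for the second term Corollary \ref{inv Xn} with initial law $\mu_0^n=\pi$ yields $W_2(\mu_{t_0}^n,\pi^n)\le e^{-mt_0}W_2(\pi,\pi^n)=\tfrac12 W_2(\pi,\pi^n)$.

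Combining these and absorbing the $\tfrac12 W_2(\pi,\pi^n)$ term on the left (legitimate precisely because $W_2(\pi,\pi^n)<\infty$) produces $W_2(\pi,\pi^n)\le 2C_1(\log n/n)^{1/4}$. To reach the stated form I would then note that $(\log n)^{1/4}=o(n^{1/4-\delta})$ for every $\delta\in(0,\tfrac14)$, so $(\log n/n)^{1/4}\le C' n^{-\delta}$ for large $n$; enlarging the constant to cover the finitely many small $n$ gives $W_2(\pi,\pi^n)\le Cn^{-\delta}$ with any $\delta\in(0,\tfrac14)$.

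The step I expect to be the crux is the clean invocation of the Słomiński strong-approximation bound at the fixed time $t_0$ with the random stationary initial datum $\xi\sim\pi$: one must verify that the cited $L^2$-estimate holds for the reflected and penalized processes started from a common $\mathcal{P}_2$ initial law rather than a deterministic point, and that its constant $C_1$, while permitted to depend on $t_0$, is finite and independent of $n$. The absorbing device is chosen deliberately so that the approximation estimate is used at only this \emph{single} fixed time, which is exactly what lets me avoid any need for a time-uniform approximation bound or a delicate balancing of $t=t_n\to\infty$ against the penalty rate.
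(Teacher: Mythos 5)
Your proposal is correct, and it takes a genuinely different route from the paper. The paper's proof runs a three-term triangle inequality $W_2(\pi,\pi^n)\le W_2(\mu_t,\pi)+W_2(\mu_t^n,\pi^n)+W_2(\mu_t^n,\mu_t)$ at a time $t$ that grows with $n$: it invokes the reflected-SDE contraction \eqref{inv Xt} from \cite{Lamperski2021}, the penalized contraction of Corollary \ref{inv Xn} (together with a uniform-in-$n$ second-moment bound on $\pi^n$ to control $W_2(\mu_0,\pi^n)$), and the S{\l}omi\'nski estimate, and then balances the exponential terms against $(\log n/n)^{1/4}$ by choosing $t=\kappa^{-1}\log n$, $\kappa=\max(c,m)$, which yields only $\delta<\min\left(c/\kappa,\,m/\kappa,\,1/4\right)$. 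Your stationary-start coupling plus absorption at the \emph{fixed} time $t_0=(\log 2)/m$ eliminates the term $W_2(\mu_t,\pi)$ entirely (you need only that $\pi$ is invariant, not the quantitative ergodicity \eqref{inv Xt} of the reflected process), eliminates the need for uniform moment bounds on $\pi^n$, and gives the sharper conclusion $W_2(\pi,\pi^n)\le 2C_1(\log n/n)^{1/4}$, i.e.\ every $\delta<1/4$ rather than a $\delta$ degraded by the ratio $\min(c,m)/\max(c,m)$. It also quietly repairs a weakness in the paper's argument: since the constant in S{\l}omi\'nski's Theorem 4.1 depends on the time horizon, the paper's application of it at $t\asymp\log n$ leaves $C_3$ implicitly $n$-dependent, whereas your use of the estimate at the single fixed time $t_0$ makes the $n$-independence of the constant transparent. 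The one obligation you share with the paper --- and correctly flag as the crux --- is that the S{\l}omi\'nski bound must be applicable with a common random $\mathcal{P}_2$ (here $\pi$-distributed, hence $D$-valued) initial condition; this is no worse than what the paper itself assumes, and your verification that $\pi,\pi^n\in\mathcal{P}_2(\bbR^d)$, which legitimizes the absorption step, is exactly the right bookkeeping.
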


\begin{proof}
By the triangle inequality, we have
\[
W_2(\pi, \pi^n) \leq W_2(\mu_t, \pi) + W_2(\mu_t^n, \pi^n) + W_2(\mu_t^n, \mu_t),
\]
where $\mu_t$ and $\mu_t^n$ are the laws of the reflected and penalized processes at time $t$, respectively. 

By \eqref{inv Xt} and Corollary \ref{inv Xn}), at time $t$,
\[
W_2(\mu_t, \pi) \leq C_1 e^{-ct} W_2(\mu_0, \pi), \quad W_2(\mu_t^n, \pi^n) \leq e^{-mt} W_2(\mu_0, \pi^n),
\]
where $c, C_1 > 0$ are independent of $n$. Note that $W_2(\mu_0, \pi^n)$ is uniformly bounded in $n$ since \[ f_n(x) = g(x) + \frac{n}{2}\text{dist}^2(x, D) \geq \frac{m}{2}\|x\|^2 - k\] for some constant $ k > 0 $ by the convexity of $f_n$ by Lemma \ref{f convex thm}. Then
\[
\int_{\mathbb{R}^d} \|x\|^2 \, d\pi^n(x) \leq K
\]
for some $K > 0 $ independent of $n$. Since $ \mu_0 \in \mathcal{P}_2(\mathbb{R}^d) $, it follows that $ W_2(\mu_0, \pi^n) $ is uniformly bounded in $n$. Thus, there exists $C_2 > 0$ such that $W_2(\mu_t^n, \pi^n) \leq C_2e^{-mt}.$ From Theorem 4.1 of \cite{slominski2013}, we have
\[
W_2(\mu_t^n, \mu_t) \leq C_3 \left( \frac{\log n}{n} \right)^{1/4},
\]
for some constant $C_3 > 0$. Combining all terms gives
\[
W_2(\pi, \pi^n) \leq C_1 e^{-ct} + C_2 e^{-mt} + C_3 \left( \frac{\log n}{n} \right)^{1/4}.
\]
Now, choose $t = \frac{1}{\kappa} \log n$ for $\kappa = \max(c, m)$. Since $\left( \frac{\log n}{n} \right)^{1/4} = o(n^{-\gamma})$ for any $\gamma < \frac{1}{4}$, let $\delta < \min\left( \frac{c}{\kappa}, \frac{m}{\kappa}, \frac{1}{4}\right)$. Then, right-hand side is bounded by $C n^{-\delta}$ for suitable $C > 0$.
\end{proof}

\section{Discrete Approximation of the Penalized Langevin Diffusion}
\subsection{Penalized Constrained Unadjusted Langevin Algorithm (PCULA)}

Having established the convergence of the continuous-time penalized process to the invariant probability measure of the reflected Langevin SDE, we now introduce a time-discretized version of the penalized SDE \eqref{Xn}. Our standing assumptions for the discretization follow from (A1) and (A2) of Theorem \ref{error contraction}: 
\begin{enumerate}
    \item[(EM1)] $g \in C^1(\bbR^d)$ is $m$-strongly convex and $\nabla g$ is $L$-Lipschitz.
    \item[(EM2)] $D \subset \bbR^d$ is nonempty, closed, and convex; $\psi(x) \coloneqq \frac{1}{2}\dist^2(x, D)$ so that $\nabla\psi(x) = x - \Pi(x)$ and $\nabla\psi(x)$ is 1-Lipschitz.
    \item[(EM3)] There exists $R > 0$ such that \[
        \sigma^2 + ||\nabla g(x)||^2 \leq R(1 + ||x||^2) \quad \text{ for all } x \in \bbR^d.\]
\end{enumerate} 

Consequently, $f_n \coloneqq g(x) + n\psi(x)$ is continuously differentiable, $m$-strongly convex and $\nabla f_n$ is $L_n$-Lipschitz where $L_n \coloneqq L + n.$ We simulate the penalized SDE instead of the reflected SDE and let the penalty term $n$ control the constraint. Applying the Euler-Maruyama scheme to \eqref{standard_penalized} gives \begin{equation}\label{EM}
    X_{k+1}^{(n,h)} = X_k^{(n,h)} - h\nabla f_n(X_k^{(n,h)}) + \sigma\sqrt{h}\xi_k
\end{equation}
for a fixed step-size $h > 0$, penalty $n \in \bbN$, and $\xi_k \sim \mathcal{N}(0, I_d)$.

From Section 2 of \cite{Moulines2019}, under assumptions (EM1)-(EM3), if $h \leq \frac{1}{m + L_n}$, then the discrete process $X_k^{(n,h)}$ admits a unique invariant measure $\pi^{(n,h)}$ such that $W_2(\pi^{(n,h)}, \pi^n) \leq Ch$ for some $C > 0$ where $\pi^n$ is the invariant measure of the continuous process $X_t^n$ from \eqref{standard_penalized}. Thus, if $\pi$ is the invariant measure of the reflected Langevin SDE \eqref{X}, then by Theorem \ref{final bound}, there exists constants $C_1, C_2, \delta > 0$ such that
\begin{equation}
    W_2\left(\pi, \pi^{(n,h)}\right) \leq C_1h + C_2n^{-\delta}
\end{equation} 

The bound on $W_2\left(\pi, \pi^{(n,h)}\right)$ ensures that the invariant measure of the discrete penalized process estimates that of the reflected Langevin SDE, subject to an appropriate step size $h$ and sufficiently large penalty parameter $n$. However, the constants $C_1, C_2$ and $\delta$ depend on the structural properties of the problem. More specifically, they depend on $m, L, d, \sigma,$ and on $n$ since $f_n$ is $L_n$-Lipschitz. Thus, increasing $n$ will tighten the constraint, but also increase the Lipschitz constant which in turn restricts the step size $h$. The proof of Theorem \ref{final bound} also shows that the bound depends on $t \gtrsim \log n$, so the number of iterations $k$ must grow at least logarithmically with $n$. Explicit formulations of the appropriate constants and parameters for Langevin methods is an active area of investigation; in particular, the step size $h$ need not stay fixed, and can be replaced by a non-increasing sequence to obtain stricter bounds. Recent related work is contained in \cites{Durmus2019, Eberle2019, Moulines2019}.

We now obtain the Penalized Constrained Unadjusted Langevin Algorithm from \eqref{EM}. This follows the framework of the Unadjusted Langevin Algorithm, with a modified potential energy function to enforce the boundary constraint. In particular, to sample from a target distribution $\pi$ supported on $D$, as in Section 1, we let $g(x) = \frac{-\sigma^2}{2}\log \pi(x).$

\begin{algorithm}[H]
\caption{Penalized Constrained Unadjusted Langevin Algorithm (PCULA)}
\begin{algorithmic}[1]
\State \textbf{Input:} Step size $h > 0$, penalty $n \in \bbN$, iterations $K$
\State \textbf{Initialize:} $X_0^{(n,h)} \in \bbR^d$
\For{$k = 0$ to $K-1$}
    \State Sample $\xi_k \sim \mathcal{N}(0, I_d)$
    \State Update:
    \[
    X_{k+1}^{(n,h)} = X_k^{(n,h)} - h\nabla f_n\left(X_k^{(n,h)}\right) + \sigma\sqrt{h}\xi_k
    \]
\EndFor
\State \textbf{Output:} $X_1^{(n,h)}, \cdots, X_K^{(n,h)}$
\end{algorithmic}
\end{algorithm}

\subsection{Truncated Gaussian Distribution via PCULA and Further Work}
To illustrate the behavior of the Penalized Constrained Unadjusted Langevin Algorithm (PCULA), we apply it to a two–dimensional example with a Gaussian distribution truncated to a closed convex domain as the target. Specifically, consider the domain
\[
D = \left\{ x=(x_1,x_2)\in\mathbb{R}^2 : \frac{x_1^2}{a^2} + \frac{x_2^2}{b^2} \le 1 \right\},
\]
an ellipse with semi–axes \(a = 1\) and \(b = \frac{1}{2}\), the diffusion coefficient $\sigma = 1,$ and the unnormalized Gaussian distribution
\[
\pi(x) \;\propto\; e^{-\alpha||x||^2/2}\,\mathbf{1}_{\{x\in D\}}
\] where the parameter $\alpha > 0$ controls the drift strength in the Langevin equation, with higher values yielding a more concentrated distribution.
\begin{center}
\begin{figure}[t]
    \centering
\includegraphics[width=0.65\linewidth]{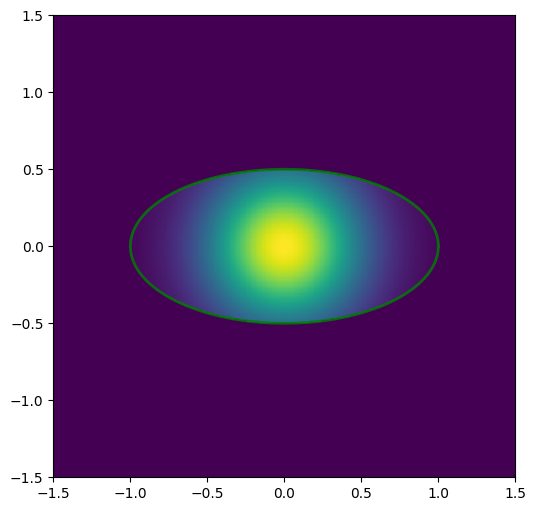}
    \caption{Truncated Gaussian supported on elliptical domain.}
    \label{fig:true-density}
\end{figure}
\end{center}
It must be noted that we do not need to compute the normalization factor as \eqref{diffusion} only requires $\nabla \log \pi(x)$.
This corresponds to choosing the potential 
\[
g(x)=\frac{\alpha}{4}\|x\|^2 ,
\]
and we construct the penalized energy function
\(f_n(x)=g(x)+n\psi(x)\) to use in our computations. Running PCULA for various values of \(n\) and fixed step size \(h\) 
demonstrates visually how the empirical distribution converges toward the truncated Gaussian, in Figure 2. The accuracy is defined as the percentage of samples within the domain $D.$
\begin{figure}[t]
    \centering
    
    \begin{subfigure}{0.48\linewidth}
        \centering
        \includegraphics[width=\linewidth]{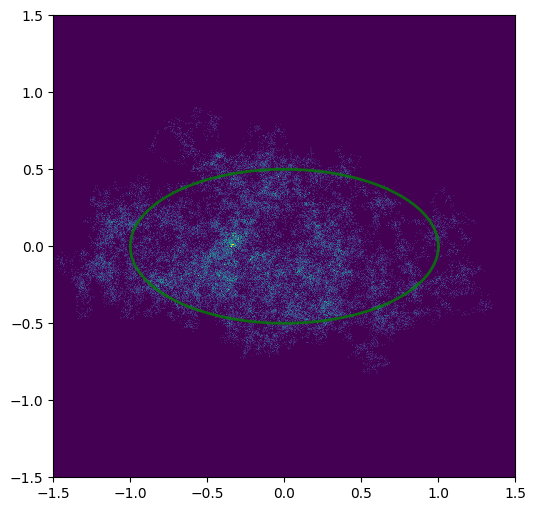}
        \caption{$n = 1,$ accuracy $= 71.9\%$.}
        \label{fig:pc1}
    \end{subfigure}
    \hfill
    \begin{subfigure}{0.48\linewidth}
        \centering
        \includegraphics[width=\linewidth]{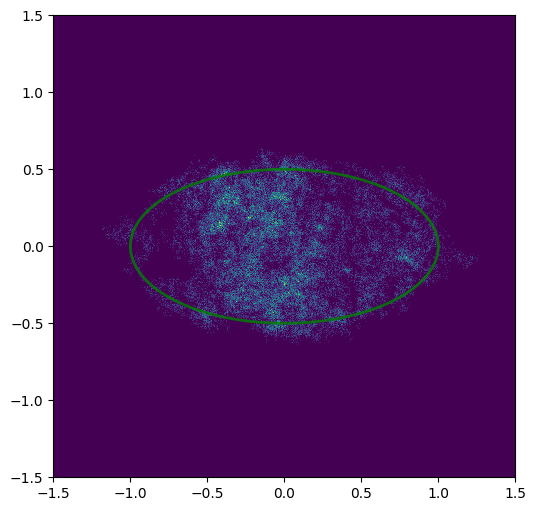}
        \caption{$n = 10,$ accuracy $= 85.8\%$.}
        \label{fig:pc10}
    \end{subfigure}

    \vspace{1em}

    \begin{subfigure}{0.48\linewidth}
        \centering
        \includegraphics[width=\linewidth]{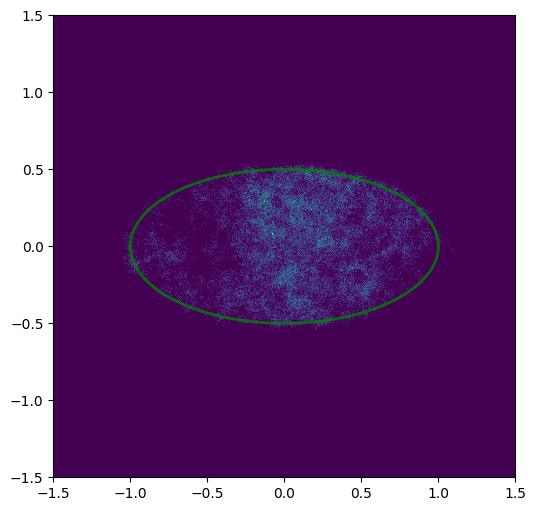}
        \caption{$n = 100,$ accuracy $= 95.3\%$.}
        \label{fig:pc100}
    \end{subfigure}
    \hfill
    \begin{subfigure}{0.48\linewidth}
        \centering
        \includegraphics[width=\linewidth]{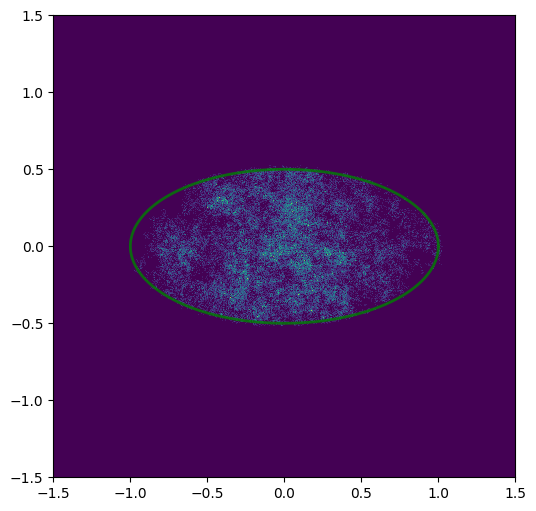}
        \caption{$n = 500,$ accuracy $= 98.5\%$.}
        \label{fig:pc500}
    \end{subfigure}

    \caption{Empirical density of truncated Gaussian distribution via PCULA with $10^5$ iterations and step size $h = 10^{-4}.$}
    \label{fig:pcula-grid}
\end{figure}

In this work, we demonstrated that boundary constraints in the Langevin
diffusion can be enforced through the reflected Langevin SDE \eqref{X}.  
To approximate the reflection mechanism, we introduced a family of penalized
SDEs \eqref{standard_penalized} incorporating a restoring force that pushes the
process back toward the domain $D$, and we proved that their invariant measures
converge to the unique invariant law of the reflected diffusion at a polynomial
rate in the penalization parameter. Discretizing the penalized dynamics via the Euler–Maruyama scheme yields the Penalized Constrained Unadjusted Langevin Algorithm (PCULA), which follows the standard ULA framework but replaces the original potential $g$ with the penalized energy $f_n$ for sufficiently large $n$.

There are several directions remaining for further exploration. An important question is whether penalization-based methods can be extended to
non-convex domains, as most existing constrained Langevin algorithms are
designed for convex bodies; recent work such as \cites{Sato2025} provides a
promising starting point for this case. Another direction is to explore alternative penalization functions or generalized penalty structures, and to optimize the choice of discretization parameters in the Euler–Maruyama scheme, following ideas developed in \cite{Durmus2019} and \cite{Gurbuz2024}. Further extensions to higher-order integrators or to underdamped Langevin dynamics also present new avenues of investigation.

\newpage

\section*{Acknowledgments}
The authors thank Professor Ludovic Tangpi from Princeton University for numerous insightful conversations that helped lead to the development of this work.

\end{document}